\documentclass[12pt]{amsart}
\usepackage{amscd,amsmath,amsthm,amssymb}
\usepackage{amsfonts,amssymb,amscd,amsmath,enumerate,verbatim}
\usepackage[left]{lineno}
\usepackage{pstricks, pst-plot,pst-3d}
\usepackage{tikz}
\newpsstyle{fatline}{linewidth=1.5pt}
\definecolor{verylight}{gray}{0.97}
\definecolor{light}{gray}{0.9}
\definecolor{medium}{gray}{0.85}
\definecolor{dark}{gray}{0.6}
\def\opn#1#2{\def#1{\operatorname{#2}}} 
\opn\chara{char} \opn\length{\ell} \opn\pd{pd} \opn\rk{rk}
\opn\projdim{proj\,dim} \opn\injdim{inj\,dim} \opn\rank{rank}
\opn\depth{depth} \opn\grade{grade} \opn\height{height}
\opn\embdim{emb\,dim} \opn\codim{codim}
\opn\Cl{Cl}

\opn\Tr{Tr} \opn\bigrank{big\,rank}
\opn\superheight{superheight}\opn\lcm{lcm}
\opn\trdeg{tr\,deg}
	\opn\reg{reg} \opn\lreg{lreg} \opn\ini{in} \opn\lpd{lpd}
	\opn\size{size} \opn\sdepth{sdepth}
	\opn\link{link}
    \opn\fdepth{fdepth}\opn\lex{lex}
	\opn\tr{tr}\opn\del{del}
	\opn\type{type}
	\opn\gap{gap}
	\opn\arithdeg{arith-deg}
	\opn\revlex{revlex}
	\opn\div{div} \opn\Div{Div} \opn\cl{cl} \opn\Cl{Cl}
	\opn\Spec{Spec} \opn\Supp{Supp} \opn\supp{supp} \opn\Sing{Sing}
	\opn\Ass{Ass} \opn\Min{Min}\opn\Mon{Mon}
	\opn\Ann{Ann} \opn\Rad{Rad} \opn\Soc{Soc}
	\opn\Im{Im} \opn\Ker{Ker} \opn\Coker{Coker} \opn\Am{Am}
	\opn\Hom{Hom} \opn\Tor{Tor} \opn\Ext{Ext} \opn\End{End}
	\opn\Aut{Aut} \opn\id{id}
	
	\opn\nat{nat}
	\opn\pff{pf}
	\opn\Pf{Pf} \opn\GL{GL} \opn\SL{SL} \opn\mod{mod} \opn\ord{ord}
	\opn\Gin{Gin} \opn\Hilb{Hilb}\opn\sort{sort}
	\opn\PF{PF}\opn\Ap{Ap}
	\opn\mult{mult}
	\opn\bight{bight}
	\opn\div{div}
	\opn\Div{Div}
	\opn\aff{aff}
	\opn\relint{relint} \opn\st{st}
	\opn\lk{lk} \opn\cn{cn} \opn\core{core} \opn\vol{vol}  \opn\inp{inp} \opn\nilpot{nilpot}
	\opn\link{link} \opn\star{star}\opn\lex{lex}\opn\set{set}
	\opn\width{wd}
	\opn\Fr{F}
	\opn\QF{QF}
	\opn\G{G}
	\opn\type{type}\opn\res{res}
	\opn\conv{conv}
	\opn\Deg{Deg}
	\opn\Sym{Sym}
	\opn\Con{Con}
	\opn\gr{gr}
	
	\def\pot#1#2{#1[\kern-0.28ex[#2]\kern-0.28ex]}

	\opn\dirlim{\underrightarrow{\lim}}
	\opn\inivlim{\underleftarrow{\lim}}
	\def\Implies{\ifmmode\Longrightarrow \else
		\unskip${}\Longrightarrow{}$\ignorespaces\fi}
	\def\implies{\ifmmode\Rightarrow \else
		\unskip${}\Rightarrow{}$\ignorespaces\fi}
	\def\iff{\ifmmode\Longleftrightarrow \else
		\unskip${}\Longleftrightarrow{}$\ignorespaces\fi}

	\let\:=\colon
	\newtheorem{Theorem}{Theorem}[section]
	\newtheorem{Lemma}[Theorem]{Lemma}

	\newtheorem{Example}[Theorem]{Example}

	\let\epsilon\varepsilon
	\let\kappa=\varkappa
	\def\qed{\ifhmode\textqed\fi
		\ifmmode\ifinner\quad\qedsymbol\else\dispqed\fi\fi}
	\def\textqed{\unskip\nobreak\penalty50
		\hskip2em\hbox{}\nobreak\hfil\qedsymbol
		\parfillskip=0pt \finalhyphendemerits=0}
	\def\dispqed{\rlap{\qquad\qedsymbol}}
	
	\opn\dis{dis}
	\def\pnt{{\raise0.5mm\hbox{\large\bf.}}}
	
	\opn\Lex{Lex}

\begin{document}
\title[Depth of edge ideals]{Depth of edge ideals and vertex connectivity of finite graphs}

\author[T.~Hibi]{Takayuki Hibi}
\author[S.~A.~ Seyed Fakhari]{Seyed Amin Seyed Fakhari}

\address{(Takayuki Hibi) Department of Pure and Applied Mathematics, Graduate School of Information Science and Technology, Osaka University, Suita, Osaka 565--0871, Japan}
\email{hibi@math.sci.osaka-u.ac.jp}
\address{(Seyed Amin Seyed Fakhari) Departamento de Matem\'aticas, Universidad de los Andes, Bogot\'a, Colombia}
\email{s.seyedfakhari@uniandes.edu.co}

\subjclass[2020]{05E40, 13D02, 13D05}

\keywords{vertex connectivity, Hochster's formula, depth, edge ideal}

\begin{abstract}
Let $G$ be a finite graph on $[n]:=\{1, \ldots, n\}$ and $\kappa(G)$ its vertex connectivity.  Let $S=K[x_1, \ldots, x_n]$ denote the polynomial ring in $n$ variables over a field $K$ and $I(G^c)$ the edge ideal of the complementary graph $G^c$ of $G$.  It is a classical result that $\depth{S/I(G^c)} \leq \kappa(G) + 1$.  We give a sharp lower bound of $\depth{S/I(G^c)}$ in terms of $n$ and $\kappa(G)$.  Furthermore, a sharp lower bound of $\depth{S/I(G^c)^2}$ as well as that of $\depth{S/I(G^c)^{(2)}}$ in terms of $n$ and $\kappa(G)$ is given.

\end{abstract}	
\maketitle
\thispagestyle{empty}

\section*{Introduction}
Every graph to be studied is simple and finite.  Let $G$ be a graph on the vertex set $V(G)=\{x_1, \ldots, x_n\}$ and $E(G)$ the set of edges of $G$.  For $W \subset V(G)$, $G - W$ stands for the induced subgraph $G|_{V(G) \setminus W}$ of $G$ on $V(G) \setminus W$. If $G$ is not a complete graph, then let $\kappa(G)$ denote the smallest cardinality of $W \subset V(G)$ for which $G - W$ is disconnected. We call $\kappa(G)$ the {\em vertex connectivity} of $G$. In particular, $G$ is connected if and only if $\kappa(G) > 0$.  
If $T$ is a tree on $[n]$, then $\kappa(G) = 1$.  If $C_n$ is a cycle of length $n$, then $\kappa(C_n) = 2$. We employ the convention that the vertex connectivity of the complete graph on $[n]$ is $n-1$.  

A {\em clique} of $G$ is a subset $C \subset V(G)$ for which $\{x_i,x_j\}\in E(G)$ for all $x_i,x_j \in C$ with $x_i \neq x_j$.  The {\em clique complex} of $G$ is the simplicial complex on $V(G)$ which consists of all cliques of $G$.  Let $S=K[x_1, \ldots, x_n]$ denote the polynomial ring in $n$ variables over a field $K$.  The {\em Stanley--Reisner ideal} of $\Delta(G)$ is the ideal  
\[
I_{\Delta(G)} = (x_i x_j : 1 \leq i < j \leq n, \{x_i,x_j\} \not\in E(G)) 
\]
of $S$ and the {\em Stanley--Reisner ring} of $\Delta(G)$ is the quotient ring $K[\Delta(G)]=S/I_{\Delta(G)}$.  The {\em edge ideal} of $G$ is the ideal 
\[
I(G) = (x_ix_j : 1 \leq i < j \leq n, \{x_i,x_j\} \in E(G)) \subset S.
\]
One has $I_{\Delta(G)} = I(G^c)$, where $G^c$ is the {\em complementary graph} (\cite[p.~153]{HHgtm260}) of $G$.  

It is observed in \cite{TH} that Hocster's formula \cite[Theorem 8.1.1]{HHgtm260} guarantees that
\begin{eqnarray}
\label{depth_kappa_G^c}   
\depth{S/I(G^c)} \leq \kappa(G) + 1.
\end{eqnarray}
If $G$ is a chordal graph (\cite[p.~155]{HHgtm260}), then the equality holds in $(\ref{depth_kappa_G^c})$, because $I(G^c)$ has a linear resolution \cite[Theorem 9.2.3]{HHgtm260}. Finding a sharp lower bound of $\depth{S/I(G^c)}$ in terms of $n$ and $\kappa(G)$ is one of our goals (Section $2$).  In addition, a sharp lower bound of depth of the second power $\depth{S/I(G^c)^2}$ together with that of the second symbolic power $\depth{S/I(G^c)^{(2)}}$ is studied (Sections $3$ and $4$).  

\section{Hocster's formula}
We recall what Hocster's formula is and how to prove the inequality (\ref{depth_kappa_G^c}).  Let $G$ be a graph on $V(G)=\{x_1, \ldots, x_n\}$ and $\Delta(G)$ the clique complex of $G$.  Hochster's formula says that the $(i,j)$-th graded Betti number of $K[\Delta(G)]$ is
\[
\beta_{i,j}(K[\Delta(G)]) = \sum_{|W|=j} \dim_K {\widetilde H}_{j-i-1}(\Delta(G)|_W;K), 
\]
where $$\Delta(G)|_W := \{ \sigma \in \Delta(G) : \sigma \subset W \} = \Delta(G) - (V(G) \setminus W)) .$$ In particular, 
\[
\beta_{i,i+1}(K[\Delta(G)]) = \sum_{|W|=i+1} \dim_K {\widetilde H}_{0}(\Delta(G)|_W;K). 
\]
In other words,
\[
\beta_{n-i,n-i+1}(K[\Delta(G)]) = \sum_{|W|=i-1} \dim_K {\widetilde H}_{0}(\Delta(G)|_{[n]\setminus W};K). 
\]
Since $\dim_K {\widetilde H}_{0}(\Delta(G)|_{[n]\setminus W};K)=0$ if and only if $G-W$ is connected, it follows that $\kappa(G) \geq i$ if and only if $\beta_{n-i,n-i+1}(K[\Delta(G)])=0$.  In other words, as is stated in \cite{TH} explicitly, one has
\[
\kappa(G) = \max\{\,i : \beta_{n-i,n-i+1}(K[\Delta(G)])=0\}.
\]
In particular,  
\[
\beta_{n-\kappa(G)-1,n-\kappa(G)}(K[\Delta(G)])\neq 0.
\]
It then follows that
\[
\projdim{K[\Delta(G)]} \geq n - \kappa(G) - 1.
\]
Now, Auslander--Buchsbaum formula guarantees that 
\[
n - \depth{K[\Delta(G)]} \geq n - (\kappa(G) + 1).
\]
In other words,
\begin{eqnarray*}
\label{depth_kappa}   
\depth{K[\Delta(G)]} - 1 \leq \kappa(G).
\end{eqnarray*}
Since $I_{\Delta(G)} = I(G^c)$, the inequality (\ref{depth_kappa_G^c}) follows.

\begin{Example}
{\em
Let $G$ be a chordal graph on $[n]$.  Since $I(G^c)$ has a linear resolution \cite[Theorem 9.2.3]{HHgtm260}, it follows that $\projdim{S/I(G^c)} = n - \kappa(G) - 1$.  One has $\depth{S/I(G^c)} - 1 = \kappa(G)$.  See \cite{HaHibi} for a related work.
}
\end{Example}

\begin{Example}
{\em
Let $G$ be the non-chordal graph of Figure $1$.  One has $\kappa(G) = 4$.  Since $S/I(G^c) = K[x_1, \ldots, x_6]/(x_2x_5, x_3x_6)$, one has $\depth{S/I(G^c)} = 4.$
}
\end{Example}

\begin{figure}
\centering
\begin{tikzpicture}[scale=2]
\node[draw,shape=circle] (1) at (1,1.5){};
\node[draw,shape=circle] (2) at (0,1){};
\node[draw,shape=circle] (3) at (0,0){};
\node[draw,shape=circle] (4) at (1,-0.5){};
\node[draw,shape=circle] (5) at (2,0){};
\node[draw,shape=circle] (6) at (2,1){};
\draw (1)--(2)--(3)--(4)--(5)--(6)--(1);
\draw (2)--(4)--(6)--(2);
\draw (3)--(5)--(1)--(3);
\draw (1)--(4);
\end{tikzpicture}
\caption{Non-chrdal graph with $\kappa(G) = \depth{S/I(G^c)} = 4$.}
\end{figure}

\begin{Example}
{\em
Let $G=K_{n,n,n}$ denote the complete multipartite graph (\cite[p.~394]{OH}) on $3n$ vertices.  Then $\depth{S/I(G^c)} = 3$ and $\kappa(G) = 2n$.   
}
\end{Example}

\section{A lower bound of depth of edge ideals}
Given a graph $G$ on $n$ vertices, a sharp lower bound of $\depth{S/I(G^c)}$ in terms of $n$ and $\kappa(G)$ is given.  Let $G$ be a graph on $V(G)=\{x_1, \ldots, x_n\}$.  The {\em neighbor set} of $x_i \in V(G)$ is $N_G(x_i):=\{x_j : \{x_i,x_j\}\in E(G)\}$.  In addition, $N_G[x_i]:=N_G(x_i)\cup \{x_i\}$.  Let $K_n$ be the complete graph on $n$ vertices.

\begin{Theorem} 
\label{main}
Let $G\neq K_n$ be a graph on $n$ vertices with $\kappa(G)=k$.  One has  $$\depth{S/I(G^c)}\geq\Big\lceil\frac{k}{2(n-k-1)}\Big\rceil+1.$$
\end{Theorem}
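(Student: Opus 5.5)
The plan is to pass to the complementary graph $H:=G^c$ and replace vertex connectivity by a degree bound. Since $G\neq K_n$ and $\kappa(G)=k$, every vertex of $G$ has degree at least $k$. (Deleting the neighbours of a vertex of non-maximal degree disconnects $G$; if $\deg_G x_i=n-1$ then certainly $\deg_G x_i\ge k$.) Hence every vertex of $H$ has degree at most $\Delta:=n-k-1\geq 1$, so $k=n-\Delta-1$. It therefore suffices to prove the following statement for edge ideals. For any graph $H$ on $m$ vertices with maximum degree at most $\Delta$ (with $\Delta\ge 1$), one has
\[
\depth S/I(H)\;\geq\; f(m):=\Big\lceil\frac{m-\Delta-1}{2\Delta}\Big\rceil+1 .
\]
Isolated vertices only contribute free variables, so they can be kept in the count of $m$. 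I would prove this by induction on $m$, keeping $\Delta$ fixed.

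\textbf{Base.} We always have $\depth S/I(H)\ge 1$, because the maximal ideal is not associated to a squarefree monomial ideal (unless the ring is $K$, which does not occur). This settles every case with $f(m)\le 1$. If $H$ has no edges, the depth is $m\ge f(m)$.

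\textbf{Inductive step.} Pick a non-isolated vertex $x$ with neighbours $y_1,\dots,y_d$ in $H$, where $d\le\Delta$. Applying the depth lemma to the exact sequences
\[
0\to S/(J_{j-1}:y_j)(-1)\to S/J_{j-1}\to S/J_j\to 0,\qquad J_j:=(I(H),y_1,\dots,y_j),
\]
gives
\[
\depth S/I(H)\ \ge\ \min\Big\{\min_{1\le j\le d}\depth S/(J_{j-1}:y_j),\ \depth S/J_d\Big\}.
\]
We now bound each term.
\begin{itemize}
\item The ideal $J_{j-1}:y_j$ equals $I(H')$ plus the variables in $N_H(y_j)\cup\{y_1,\dots,y_{j-1}\}$. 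Here $H'$ is the induced subgraph on the vertices outside $N_H[y_j]\cup\{y_1,\dots,y_{j-1}\}$, and $y_j$ survives as a free variable. At most $\Delta+(\Delta-1)$ vertices other than $y_j$ are killed, so $H'$ has at least $m-2\Delta$ vertices and maximum degree at most $\Delta$. By induction this term has depth at least $1+f(m-2\Delta)=f(m)$.
\item In $S/J_d$ the vertex $x$ becomes isolated, giving one free variable, and at most $\Delta$ vertices are removed. The same argument gives depth at least $1+f(m-\Delta-1)\ge f(m)$.
\end{itemize}
In both cases, when the remaining graph is small enough that $f$ gives at most $1$, we use the trivial bound $\depth\ge 1$ for the smaller ring instead. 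This still yields at least $2\ge f(m)$, because $m-\Delta-1\le 2\Delta$ in that range.

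The main point needing care is the bookkeeping in the colon terms. I need to check that the successive colons really remove at most $2\Delta-1$ vertices besides the freed variable. I also need to check that $f(m-2\Delta)+1=f(m)$ holds exactly, which is where the factor $2$ in the denominator comes from. Finally, the degree bound must be preserved on induced subgraphs, which is automatic. Substituting $m=n$ and $\Delta=n-k-1$ then gives $\depth S/I(G^c)\ge\lceil k/(2(n-k-1))\rceil+1$.
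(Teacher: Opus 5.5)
\textbf{There is a genuine gap, and it is in your first reduction.} Replacing $\kappa(G)=k$ by the degree bound $\deg_H\le \Delta$ discards connectivity. The statement you then set out to prove is false. Take $H=C_4$ on $x_1,x_2,x_3,x_4$ (so $H^c=2K_2$), with $\Delta=2$ and $m=4$. Then $f(4)=\lceil 1/4\rceil+1=2$, but $I(C_4)=(x_1,x_3)\cap(x_2,x_4)$ and $\depth S/I(C_4)=1$. More generally, $H=K_{\Delta,\Delta}$ has depth $1$ while $f(2\Delta)=2$ for $\Delta\ge 2$.

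Your induction breaks exactly where you invoke ``the trivial bound $\depth\ge 1$ for the smaller ring''. When $m\le 2\Delta$, the graph $H'$ in the last colon term $J_{d-1}:y_d$ can be empty. In $C_4$ with $x=x_1$, $y_1=x_2$, $y_2=x_4$, one gets $J_1:y_2=(x_1,x_2,x_3)$. The smaller ring is then $K$, of depth $0$, so you only obtain $1$, not $2$. In the whole range $\Delta+2\le m\le 3\Delta+1$ the target $f(m)=2$ says $\depth\ge 2$. By Hochster's formula this is equivalent to $H^c$ being connected, which no degree bound can give.

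\textbf{How to repair it.} Run your induction under the hypothesis $\kappa(H^c)\ge m-\Delta-1$ instead of $\deg_H\le\Delta$; this hypothesis implies the degree bound.
\begin{enumerate}
\item The hypothesis is hereditary for induced subgraphs. Deleting $w$ vertices lowers $\kappa$ by at most $w$, which is exactly the inequality $\kappa(G)\le \kappa(G-W)+|W|$ the paper uses.
\item Settle every $m\ge 1$ with $f(m)\le 2$ directly. The case $f(m)\le 1$ is trivial. The case $f(m)=2$ forces $m\ge \Delta+2$, hence $H^c$ is connected, hence $\beta_{m-1,m}=0$ and $\depth\ge 2$, as in the computation of Section~1.
\item For $m\ge 3\Delta+2$ your depth-lemma step then works.
\begin{itemize}
\item Each $H'$ has at least $m-2\Delta\ge \Delta+2$ vertices and inherits the hypothesis, so induction applies. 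By monotonicity of $f$ this gives $1+f(m-2\Delta)=f(m)$.
\item For $S/J_d$ you get $1+f(m-\Delta-1)=\lceil (m-2)/(2\Delta)\rceil+1\ge f(m)$.
\end{itemize}
\end{enumerate}
Once repaired, your argument is a legitimate alternative to the paper's. The paper works with Hochster's formula and Mayer--Vietoris sequences on links of the non-neighbours of $x_1$, rather than with colon ideals. It uses connectivity at the analogous point: the case $i=0$, where $\widetilde H_0(\Delta;K)\neq 0$ forces $k=0$.
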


\begin{proof}
Let $V(G) = \{x_1, \ldots, x_n\}$ denote the vertex set of $G$.  We prove the inequality 
\[
\depth{S/I(G^c)}\geq\frac{k}{2(n-k-1)}+1.
\]
by using induction on $n$.  Since $G \neq K_n$, if $n=2$, then $G$ consists of two isolated vertices. So, $\kappa(G)=0$ and $\depth{S/I(G^c)}=1$. Let $n\geq 3$ and $\Delta = \Delta(G)$.  Suppose that there are $i$ and $W\subseteq V(G)$ for which ${\widetilde H}_i(\Delta|_W;K)\neq 0$.  By virtue of Hochster's formula, what we must prove is 
\begin{eqnarray}
\label{aaaaa}
|W|-i\leq n-\frac{k}{2(n-k-1)}.
\end{eqnarray}
If $W=\emptyset$, then $i=-1$ and (\ref{aaaaa}) is obvious. So, assume that $|W|\geq 1$. 

\medskip

\noindent
{\bf (Case 1.)}  Let $W\neq V(G)$.  Since ${\widetilde H}_i(\Delta|_W;K)\neq 0$, one has ${\widetilde H}_i(\Delta(H);K)\neq 0$, where $H:=G|_W$. In particular, $H$ is not a complete graph. It then follows from Hochser's formula that$$\depth{S_H}/I(H^c)\leq i+1,$$where $S_H:=K[x_i:x_i\in W]$. We deduce from induction hypothesis that$$\depth{S_H/I(H^c)}\geq\frac{\kappa(H)}{2(|W|-\kappa(H)-1)}+1.$$Consequently,$$\frac{\kappa(H)}{2(|W|-\kappa(H)-1)}\leq i.$$Furthermore, $$k=\kappa(G)\leq \kappa(H)+n-|W|.$$ Since $|W|-n< 0$, it follows that
\begin{align*}
i\geq\frac{k-n+|W|}{2(|W|-k+n-|W|-1)}=\frac{k-n+|W|}{2(n-k-1)}\geq \frac{k}{2(n-k-1)},
\end{align*}
as desired.

\medskip

\noindent
{\bf (Case 2.)} Let $W=V(G)$. So, ${\widetilde H}_i(\Delta;K)\neq 0$. Let $V(G) \setminus N_G[x_1]=\{x_{i_1}, \ldots, x_{i_t}\}$. Since ${\rm deg}_{G}(x_1)\geq k$, one has $k\leq n-t-1$.  If $i=0$, then ${\widetilde H}_0(\Delta;K)\neq 0$ and $G$ is disconnected. So, $k=0$ and (\ref{aaaaa}) is obvious. Let $i\geq 1$.  It follows that 
\[
\link_{\Delta}(\{x_{i_1}\}):= \{\sigma \in \Delta : x_{i_1} \not\in \sigma, \sigma \cup \{x_{i_1}\} \in \Delta\}
\]
is the clique complex of $H_1$, where $H_1$ is the induced subgraph of $G$ on $N_G(x_{i_1})$. 

Suppose that ${\widetilde H}_{i-1}(\link_{\Delta}(\{x_{i_1}\});K)\neq 0$. In particular, $H_1$ is not a complete graph. Hochster's formula says that$$\depth{S_{H_1}/I(H_1^c)}\leq i,$$where $S_{H_1}=K[x_j : x_j\in V(H_1)]$.  We deduce from induction hypothesis that$$\depth{S_{H_1}/I(H_1^c)}\geq\frac{\kappa(H_1)}{2(|V(H_1)|-\kappa(H_1)-1)}+1.$$It then follows that $$i\geq\frac{\kappa(H_1)}{2(|V(H_1)|-\kappa(H_1)-1)}+1$$Since $$k=\kappa(G)\leq \kappa(H_1)+n-|V(H_1)|$$ and $$|V(H_1)|=\deg_{G}(x_{i_1})\geq k,$$ the computation done in (Case 1) yields 
\begin{align*}
i\geq\frac{k-n+|V(H_1)|}{2(n-k-1)}+1\geq \frac{k}{2(n-k-1)}.
\end{align*}
So, we are done.  

Suppose that ${\widetilde H}_{i-1}(\link_{\Delta}(\{x_{i_1}\});K)=0$. Since ${\widetilde H}_i(\Delta;K)\neq 0$, by virtue of Mayer--Vietoris sequence, it follows that ${\widetilde H}_i(\Gamma;K)\neq 0$, where  $\Gamma:=\Delta|_{V(G) \setminus \{x_{i_1}\}}$, which is the clique complex of $G-\{x_{i_1}\}$. If $t=1$, then $\Gamma$ is the clique complex of the induced subgraph of $G$ on $N_G[x_1]$. In particular, $\Gamma$ is a cone which contradicts ${\widetilde H}_i(\Gamma;K)\neq 0$. Therefore, $t\geq 2$. Let $H_2$ be the induced subgraph of $G-\{ x_{i_1}\}$ on $N_{G- \{x_{i_1}\}}(x_{i_2})$, whose clique complex is $\link_{\Gamma}(\{x_{i_2}\})$. Suppose that ${\widetilde H}_{i-1}(\link_{\Gamma}(\{x_{i_2}\});K)\neq 0$. In particular, $H_2$ is not a complete graph. Hochster's formula says that$$\depth{S_{H_2}/I(H_2^c)}\leq i,$$where $S_{H_2}=K[x_j : x_j\in V(H_2)]$. We deduce from induction hypothesis that$$\depth{S_{H_2}/I(H_2^c)}\geq\frac{\kappa(H_2)}{2(|V(H_2)|-\kappa(H_2)-1)}+1.$$It then follows that  $$i\geq\frac{\kappa(H_2)}{2(|V(H_2)|-\kappa(H_2)-1)}+1.$$It follows from $t\geq 2$ that $k\leq n-3$. Since $$k=\kappa(G)\leq \kappa(H_2)+n-|V(H_2)|$$ and $$|V(H_2)|\geq \deg_G(x_{i_2})-1\geq k-1,$$ 
the computation done in (Case 1) yields 
\begin{align*}
i\geq\frac{k-n+|V(H_2)|}{2(n-k-1)}+1\geq \frac{k}{2(n-k-1)}.
\end{align*}
So, we are done. Suppose that ${\widetilde H}_{i-1}(\link_{\Gamma}(\{x_{i_2}\});K)=0$. Since ${\widetilde H}_i(\Gamma;K)\neq 0$, it follows from Mayer--Vietoris sequence that 
\[
{\widetilde H}_i(\Delta|_{V(G) \setminus \{x_{i_1}, x_{i_2}\}};K)={\widetilde H}_i(\Gamma|_{(V(G)\setminus\{x_{i_1}\})\setminus\{x_{i_2}\}};K)\neq 0. 
\]
Repeating the above procedures and using $t\leq n-k-1$, we deduce that $${\widetilde H}_i(\Delta - \{x_{i_1}, \ldots, x_{i_{\ell}}\};K)\neq 0$$ for each $1 \leq \ell \leq t$. In particular, $${\widetilde H}_i(\Delta - \{x_{i_1}, \ldots, x_{i_t}\};K)\neq 0.$$  Finally, recall that $\{x_{i_1}, \ldots, x_{i_t}\}=V(G)\setminus N_G[x_1]$. Hence $\Delta- \{x_{i_1}, \ldots, x_{i_t}\}$ is the clique complex of the induced subgraph of $G$ on $N_G[x_1]$. Since $\Delta- \{x_{i_1}, \ldots, x_{i_t}\}$ is a cone, one has ${\widetilde H}_i(\Delta - \{x_{i_1}, \ldots, x_{i_t}\};K)=0$, a contradiction.
\, \, \, \, \, \, \, \, \, \, \, 
\end{proof}

One of the dirsct consequences of Theorem \ref{main} is that if $\depth{S/I(G^c)} = 2$, then 
\begin{eqnarray}
\label{abcde}
k \leq \Big\lfloor\frac{\,2n-2\,}{3}\Big\rfloor.
\end{eqnarray}
It would be of interest to classify all pairs $(n,k)$ for which there is a graph $G$ on $n$ vertices with $k=\kappa(G)$ and $\depth{S/I(G^c)}=2$.

\begin{Example}
    {\em
Let $n \geq 5$.  Let $c_n$ denote the cycle of length $n$ on $\{x_1,\ldots,x_n\}$ and $C_n'$ that on $\{x_{1'},\ldots,x_{n'}\}$.  Let $G$ be the graph on $\{x_1, \ldots, x_{n}, x_{1'}\ldots, x_{n'}\}$ obtained from the disjoint union of $C_n \cup C_n'$ by adding those edges $\{x_i, x_j'\}$ with $i \neq j$.  One easily see that $k=\kappa(G)= n+1$ and $\depth{S/I(G^c)}=2$.  For example, if $n=10$, then $k = 6$.  Thus one has the equality in (\ref{abcde}). 

On the other hand, assume that $G=K_{5,5}$ is a complete bipartite graph.  Then $k=\kappa(G)= 5$ and $\depth{S/I(G^c)}=2$.  By removing edges of $K_{5,5}$, one easily construct a graph $H_i$ on $10$ vertices with $k=\kappa(G)= i$, where $i = 1,2,3,4$, and $\depth{S/I(G^c)}=2$.
    }
\end{Example}

\section{Depth of second symbolic powers of edge ideals}
We now turn to the study on a lower bound of depth of the quotient ring of the second symbolic power $I(G^c)^{(2)}$ of an edge ideal $I(G^c)$.

\begin{Theorem} \label{main2}
Let $G\neq K_n$ be a graph on $n$ vertices with $\kappa(G)=k$.  One has $$\depth{S/I(G^c)^{(2)}}\geq\Big\lceil\frac{k}{2(n-k-1)}\Big\rceil.$$
\end{Theorem}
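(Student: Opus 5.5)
We plan to reduce the statement to Theorem~\ref{main} applied to links of vertices, using the known description of the depth of second symbolic powers of Stanley--Reisner ideals of flag complexes. Write $\Delta=\Delta(G)$ and $I=I(G^c)=I_\Delta$. Since $I^{(2)}$ is the intersection of the squares of the minimal primes of $I$, polarization or Takayama's formula for the local cohomology of $S/I^{(2)}$ yields a characterization of the form
\[
\depth S/I^{(2)} = \min\Big\{\depth S/I,\ \min_{x\in V(G)} \big(\depth S_{x}/I_{\link_\Delta(\{x\})}\big)\Big\}
\]
or, at least, the inequality $\depth S/I^{(2)}\ge \min_{x}\depth K[\link_\Delta(\{x\})]$ together with $\depth S/I$. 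Here $S_x$ is the polynomial ring on the vertices of $\link_\Delta(\{x\})$. This is the Minh--Trung type result on depth of symbolic powers. For a flag complex, $\link_\Delta(\{x\})$ is the clique complex of $G|_{N_G(x)}$, so each term is again of the form $\depth S_H/I(H^c)$ with $H=G|_{N_G(x)}$.

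Once this reduction is available, we bound each term. For $\depth S/I$, Theorem~\ref{main} gives $\lceil k/(2(n-k-1))\rceil+1$, which exceeds what we need. For a vertex $x$, set $H=G|_{N_G(x)}$ and $m=|N_G(x)|\ge k$. If $H$ is complete, then its clique complex is a simplex, the depth equals $m\ge k\ge \lceil k/(2(n-k-1))\rceil$, and we are done. Otherwise Theorem~\ref{main} gives $\depth S_H/I(H^c)\ge \kappa(H)/(2(m-\kappa(H)-1))+1$.

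To compare this with $k$ and $n$, we use $\kappa(G)\le \kappa(H)+n-m$. This holds because deleting the $n-m$ vertices outside $N_G(x)$ (including $x$) and then a separating set of $H$ disconnects $G$; if the deletion exhausts everything, we interpret it in the convention sense. Hence $\kappa(H)\ge k-n+m$, and $m-\kappa(H)-1\le n-k-1$. As in Case~1 of the proof of Theorem~\ref{main}, this gives
\[
\frac{\kappa(H)}{2(m-\kappa(H)-1)}+1\ \ge\ \frac{k-n+m}{2(n-k-1)}+1 .
\]
The desired bound $k/(2(n-k-1))$ then follows from $(n-m)/(2(n-k-1))\le 1$. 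Since $m\ge k$ and $x\notin N_G(x)$, we have $n-m\le n-k$, while $2(n-k-1)\ge n-k$ holds once $n-k\ge 2$, which is true because $G\neq K_n$. Taking ceilings, which is legitimate since depth is an integer, yields the claim.

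The main obstacle is the first step: justifying the depth formula (or lower bound) for $S/I^{(2)}$ in terms of $S/I$ and the links of vertices. If no clean citation is available, we would prove it directly by Takayama's formula. The degree complexes $\Delta_{\mathbf a}(I^{(2)})$ for $\mathbf a\in\mathbb{N}^n$ are either $\Delta$ itself (for $\mathbf a$ supported in no vertex with exponent $\ge1$ mattering) or, when $a_x=1$ for a single $x$, the star/link-type subcomplex consisting of faces $F$ with $F\cup\{x\}$ face, restricted appropriately. For larger $\mathbf a$ they are cones, which have vanishing homology. Nonvanishing of $\widetilde H_{i-|\cdot|-1}$ of these complexes then transfers, via Hochster-type bookkeeping, to nonvanishing reduced homology of restrictions of $\Delta$ or of $\link_\Delta(\{x\})$. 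The restrictions are handled exactly as in Case~1 of Theorem~\ref{main}, since $\kappa$ drops by at most the number of deleted vertices.
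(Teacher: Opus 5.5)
The gap is in your first step: the reduction is false, both as an equality and as the weaker lower bound. By Takayama's formula (as computed by Minh and Trung for symbolic powers), for $\mathbf a\in\mathbb N^n$ the degree complex $\Delta_{\mathbf a}(I^{(2)})$ is generated by the facets $F$ of $\Delta$ with $\sum_{j\notin F}a_j\le 1$.

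\begin{itemize}
\item For $\mathbf a=e_x$ this is $\Delta$ itself, not a link. Vertex links only arise from negative coordinates, and those terms are the same as for $S/I$.
\item For $\mathbf a=e_x+e_y$ with $\{x,y\}\in E(G^c)$ it is $\st_\Delta(x)\cup\st_\Delta(y)$, which is not a cone. Since $\st_\Delta(x)\cap\st_\Delta(y)=\Delta(G|_{N_G(x)\cap N_G(y)})$, Mayer--Vietoris gives a contribution $\widetilde H_{i-2}(\Delta(G|_{N_G(x)\cap N_G(y)});K)$ to $H^i_{\mathfrak m}(S/I^{(2)})$.
\end{itemize}

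Your reduction omits these common-neighbourhood terms, and your description of the degree complexes in the last paragraph is wrong for the same reason.

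Here is a counterexample. Let $G$ be the square of the path on $x_1,\ldots,x_6$, i.e.\ $x_ix_j\in E(G)$ iff $1\le |i-j|\le 2$. It is chordal with $\kappa(G)=2$, so $\depth S/I(G^c)=3$. Every vertex link is a path on at least two vertices, so it has depth $2$, and your formula predicts $\depth S/I(G^c)^{(2)}\ge 2$. But $N_G(x_1)\cap N_G(x_6)=\emptyset$. So for $\mathbf a=e_1+e_6$ the degree complex is the disjoint union of the triangles $\{x_1,x_2,x_3\}$ and $\{x_4,x_5,x_6\}$. Hence $H^1_{\mathfrak m}(S/I(G^c)^{(2)})\neq 0$ and the depth is $1$.

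Running Takayama's formula through all $\mathbf a$ repairs the reduction. Negative coordinates produce links of faces inside the complexes above, and all other $\mathbf a$ give cones, contractible complexes, or the void complex. The correct reduction is
\[
\depth S/I^{(2)}=\min\bigl\{\depth S/I,\ 1+\depth K[\Delta(G|_{N_G(x)\cap N_G(y)})] : \{x,y\}\in E(G^c)\bigr\},
\]
with depth $0$ when the common neighbourhood is empty. The second kind of term is exactly item (ii) of the paper's proof, because $S/((I(G^c):x_i)+(I(G^c):x_j))\cong K[\Delta(G|_{N_G(x_i)\cap N_G(x_j)})]$. Your vertex-link terms play the role of item (i) and are harmless.

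To finish, put $W:=N_G(x)\cap N_G(y)$. Since each of $x,y$ has at most $n-k-1$ non-neighbours, $|W|\ge 2k-n+2$, i.e.\ $n-|W|\le 2(n-k-1)$. With this, your computation with Theorem~\ref{main} still works when $G|_W$ is nonempty and not complete. When $W=\emptyset$ or $G|_W$ is complete, the term is just $1+|W|$, and you need the paper's case split:
\begin{itemize}
\item if $k\le (2n-2)/3$, the target bound is at most $1$;
\item if $k>(2n-2)/3$, then $1+|W|\ge 2k-n+3\ge k/2\ge k/(2(n-k-1))$.
\end{itemize}
This degenerate case is exactly where the bound is sharp: for instance $C_6$, where $N_G(x_1)\cap N_G(x_4)=\emptyset$, and the example above. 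Your formula gives the right value for $C_6$ only because its vertex links happen to be disconnected.
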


\begin{proof}
Parts of the proof are similar to that of \cite[Theorem 4.2]{S}. So, we only sketch the proof with highlighting the difference.  We show the inequality$$\depth{S/I(G^c)^{(2)}}\geq\frac{k}{2(n-k-1)}.$$By virtue of the proof of \cite[Theorem 4.2]{S}, we show that for each $\{x_i,x_j\}\in E(G^c)$ and for each $A\subseteq N_{G^c}(x_i) \cup N_{G^c}(x_j)$ with $x_i,x_j\notin A$, one has
\begin{itemize}
\item [(i)] $\depth{S_A/(I(G^c\setminus A):x_i)}\geq \frac{k}{2(n-k-1)}$; and
\item [(ii)] $\depth{S_A/((I(G^c\setminus A):x_i)+(I(G^c\setminus A):x_j))}\geq \frac{k}{2(n-k-1)}-1$,
\end{itemize}
where $S_A=K[x_k : x_k\notin A]$. 

First we prove (i). Since $x_i\notin A$, one has $$\depth{S_A/(I(G^c\setminus A):x_i)}\geq 1.$$Thus (i) is clear if $k\leq (2n-2)/3$. Let $k> (2n-2)/3$.  It follows from the proof of \cite[Theorem 4.2]{S} that$$\depth{S_A/(I(G^c- A):x_i)}=\depth{S'/(I(G^c- (A\cup N_{G^c}[x_i])})+1,$$where $S'=K[x_k : x_k\notin A\cup N_{G^c}[x_i]]$.  Now, set $H:=G- (A\cup N_{G^c}[x_i])$. Then $G^c- (A\cup N_{G^c}[x_i])=H^c$.

Suppose that either $H$ is a complete graph or $V(H)=\emptyset$. One has $$A\cup N_{G^c}[x_i]\subseteq N_{G^c}(x_i)\cup N_{G^c}(x_j).$$ Since each vertex of $G$ is contained at least $k$ edges, each vertex of $G^c$ is contained at most $n-k-1$ edges. Thus$$|A\cup N_{G^c}[x_i]|\leq |N_{G^c}(x_i)\cup N_{G^c}(x_j)|\leq 2(n-k-1).$$Consequently, $|V(H)|\geq 2k-n+2$. Since $H$ is either a complete graph or $V(H)=\emptyset$ and since $k> (2n-2)/3$, it follows that
\begin{align*}
&\depth{S'/(I(G^c- (A\cup N_{G^c}[x_i])})+1 \\=& \, \depth{S'/I(H^c)}+1 \, = \, |V(H)|+1\\ \geq & \, \, 2k-n+3 \, \geq \, \frac{\,\,k\,\,}{2} \, \geq \, \frac{k}{\,2(n-k-1)\,}.  
\end{align*}

Suppose that $V(H)\neq \emptyset$ and $H$ is not a complete graph. Theorem \ref{main} says that 
\begin{align*}
& \depth{S'/(I(G^c\setminus (A\cup N_{G^c}[x_i])})+1\\=&  \depth{S'/I(H^c)}+1 \\ \geq & \,\, \frac{\kappa(H)}{2(|V(H)-\kappa(H)-1)}+1 \\\geq& \, \, \frac{k-n+|V(H)|}{2(n-k-1)}+1 \\  \geq & \, \, \frac{k}{2(n-k-1)} \quad (\, \text{since} \, \, \,  |V(H)|\geq 2k-n+2).
\end{align*}
This completes the proof of (i). 

The proof of (ii) is similar.
\, \, \, \, \, \, \, \, \, \, \, \, \, \, \, \, \, \, \, \, \, \, \, \, \, \, \, \, \, \, \, \, \, \, 
\end{proof}

\section{Depth of second powers of edge ideals}
Finally, a lower bound of depth of the quotient ring of the second power $I(G^c)^{2}$ of an edge ideal $I(G^c)$ is studied.

\begin{Lemma} \label{ineq}
Let $k$ and $n$ be integers with $0\leq k\leq n-2$ and $k> (2n-2)/3$.  One has $$3k-2n+3\geq \frac{k}{\,2(n-k-1)\,}-1.$$    
\end{Lemma}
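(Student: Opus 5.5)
The plan is to substitute $m:=n-k-1$, which turns the inequality into an elementary statement about the two integers $k$ and $m$. The hypothesis $k\leq n-2$ says exactly $m\geq 1$, so the denominator $2(n-k-1)=2m$ is positive and we may clear it without reversing the inequality.

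First, rewrite the data in terms of $m$. Since $n=k+m+1$, the left-hand side becomes
\[
3k-2n+3 = 3k-2k-2m-2+3 = k-2m+1.
\]
The hypothesis $k>(2n-2)/3$ reads $3k>2k+2m$, i.e.\ $k>2m$. As $k$ and $m$ are integers, this gives $k\geq 2m+1$. The claim is then equivalent to
\[
k-2m+2\geq \frac{k}{2m}.
\]
Multiplying by $2m>0$, it is equivalent to $2mk-4m^2+4m\geq k$, that is,
\[
(2m-1)k\geq 4m(m-1).
\]

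Second, verify this last inequality. If $m=1$ the right-hand side is $0$ and the left-hand side is $k\geq 0$, so it holds. If $m\geq 2$, then $2m-1>0$ and $k\geq 2m+1$ give
\[
(2m-1)k\geq (2m-1)(2m+1)=4m^2-1\geq 4m^2-4m.
\]
This completes the argument.

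No step is a genuine obstacle. The only point needing care is to use integrality to upgrade $k>2m$ to $k\geq 2m+1$, and to note that $m\geq 1$ makes the denominator positive. Even the weaker bound $k\geq 2m$ would suffice, since $(2m-1)2m=4m^2-2m\geq 4m^2-4m$.
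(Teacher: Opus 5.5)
Your proof is correct, and it takes a different route from the paper's.

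The paper splits at the threshold $k=(6n-6)/7$, where $\frac{k}{2(n-k-1)}=3$:
\begin{itemize}
\item Below the threshold, the right-hand side is at most $2$. Integrality together with $3k>2n-2$ gives $3k-2n+3\geq 2$.
\item Above it, the paper uses $3k-2n+3\geq k/2\geq \frac{k}{2(n-k-1)}$. The first of these inequalities is equivalent to $5k\geq 4n-6$, which is left to the reader.
\end{itemize}

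You instead set $m=n-k-1$ and clear the positive denominator. This reduces everything to the single inequality $(2m-1)k\geq 4m(m-1)$ under $k>2m$.

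Your route has three advantages. It avoids any case distinction; even your separate treatment of $m=1$ is unnecessary, since $2m-1>0$ for every $m\geq 1$. Every step is an explicit equivalence, so nothing is left implicit. And, as you note, it shows the hypothesis can be relaxed to $k\geq (2n-2)/3$ with no appeal to integrality. The paper's first case genuinely needs $3k\geq 2n-1$: at $3k=2n-2$ its crude bound $3k-2n+3\geq 2$ fails, although the inequality itself still holds.

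The paper's argument, in exchange, never leaves the original variables $n,k$. It relies only on two rough size estimates, one for each range of $k$.
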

\begin{proof}
If $k\leq (6n-6)/7$, then$$\frac{k}{2(n-k-1)}\leq 3$$and the assertion follows from $3k-2n+3\geq 2$. If $k> (6n-6)/7$, then$$3k-2n+3\geq \frac{\,\,k\,\,}{2}\geq \frac{k}{\,2(n-k-1)\,},$$ 
as desired.
\hspace{11.6cm}
\end{proof}
\begin{Theorem} \label{main3}
Let $G\neq K_n$ be a graph on $n$ vertices with $\kappa(G)=k$. Then$$\depth{S/I(G^c)^2}\geq\Big\lceil\frac{k}{2(n-k-1)}\Big\rceil-1.$$
\end{Theorem}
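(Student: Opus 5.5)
Write $I=I(G^c)$ and $d:=\frac{k}{2(n-k-1)}$. The plan is to show $\depth S/I^2\geq d-1$, following the template of Theorem \ref{main2}. The new ingredient needed is a version of the reduction in \cite[Theorem 4.2]{S} that works for the ordinary second power. The standard way to control $\depth S/I^2$ is the short exact sequence $0\to I/I^2\to S/I^2\to S/I\to 0$. By Theorem \ref{main} we have $\depth S/I\geq d+1$, so it suffices to show $\depth I/I^2\geq d-1$.

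For $I/I^2$ we use the filtration by the generators $u_1=x_{i_1}x_{j_1},\dots,u_m$ of $I$, i.e.\ the edges of $G^c$. The successive quotients have the form
\[
(I^2+(u_1,\dots,u_\ell))/(I^2+(u_1,\dots,u_{\ell-1}))\cong S/\bigl((I^2+(u_1,\dots,u_{\ell-1})):u_\ell\bigr)(-2).
\]
For a suitable ordering of edges (as in \cite{S}), each colon ideal $(I^2:u_\ell)+(\text{earlier})$ is, up to variables, of the form
\[
(I(G^c-A):x_i)+(I(G^c-A):x_j)
\]
or $I(G^c-A)$ plus linear forms, where $\{x_i,x_j\}\in E(G^c)$ and $A\subseteq N_{G^c}(x_i)\cup N_{G^c}(x_j)$. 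This uses the well-known formula: $I^2:x_ix_j$ equals $I$ plus the products $x_px_q$ with $p\in N(x_i)$, $q\in N(x_j)$, which are even-connections. Using the depth lemma along the filtration, it suffices to prove
\[
\depth S_A/\bigl((I(G^c- A):x_i)+(I(G^c- A):x_j)\bigr)\geq d-1
\]
together with a corresponding bound for the terms involving $x_px_q$.

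We bound these as in (ii) of Theorem \ref{main2}. If $k\leq(2n-2)/3$ then $d\leq 1$, so $d-1\leq 0$ and there is nothing to prove. So assume $k>(2n-2)/3$. Modulo the colon, the ideal becomes $I(H^c)$ plus variables, where $H=G-(A\cup N_{G^c}[x_i]\cup N_{G^c}[x_j])$ and the removed set has size at most $2(n-k-1)$. Hence $|V(H)|\geq 2k-n+2$, and in the worst case (extra deletions) $|V(H)|\geq 3k-2n+3$. If $H$ is complete or empty, the depth is $|V(H)|\geq 3k-2n+3$, which is at least $d-1$ by Lemma \ref{ineq}; this is exactly why that lemma was set up. Otherwise, Theorem \ref{main} gives
\[
\depth\geq\frac{\kappa(H)}{2(|V(H)|-\kappa(H)-1)}+1\geq\frac{k-n+|V(H)|}{2(n-k-1)}+1\geq d-1,
\]
using $\kappa(H)\geq k-(n-|V(H)|)$ exactly as in Case 1 of Theorem \ref{main}.

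The main obstacle is the middle step. We must verify that the colon ideals of $I^2$ arising in the filtration really reduce to the shape above, with a deleted set of size at most about $3(n-k-1)$; that count is what produces the quantity $3k-2n+3$. In particular, the mixed generators $x_px_q$ with $p\in N_{G^c}(x_i)$, $q\in N_{G^c}(x_j)$ require handling by a further short exact sequence on one of the variables. We would try to import this step verbatim from the proof of \cite[Theorem 4.2]{S} (the power analogue), only replacing its final numeric estimate by Lemma \ref{ineq} and Theorem \ref{main}. Taking ceilings at the end gives the statement.
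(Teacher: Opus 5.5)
\textbf{There is a genuine gap: the reduction is right, but the modules it produces are never bounded.} Your outer reduction is sound. It is essentially the reduction the paper imports from \cite[Theorem 3.6]{S'}; note that \cite[Theorem 4.2]{S} concerns $I^{(2)}$, not $I^2$. The sequence $0\to I/I^2\to S/I^2\to S/I\to 0$, Theorem \ref{main}, and the filtration of $I/I^2$ by the edges of $G^c$ reduce everything to bounding the successive quotients. Since colons of monomial ideals distribute over sums, these quotients are exactly $S_A/(I(G^c-A)^2:x_ix_j)$, where $A$ is the set of $G^c$-neighbours of $x_i$ or $x_j$ lying on earlier edges.

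The shape you claim for these colon ideals, $(I(G^c-A):x_i)+(I(G^c-A):x_j)$ or $I(G^c-A)$ plus variables, is the shape for the \emph{symbolic} square, i.e.\ item (ii) of Theorem \ref{main2}. It is false for $I^2$. By Banerjee's formula, $I(G^c-A)^2:x_ix_j$ is $I(G^c-A)$ plus:
\begin{itemize}
\item the quadratic generators $x_px_q$ with $x_p\in N_{G^c-A}(x_i)$, $x_q\in N_{G^c-A}(x_j)$, $p\neq q$;
\item the squares $x_p^2$ with $x_p\in L:=N_{G^c-A}(x_i)\cap N_{G^c-A}(x_j)$.
\end{itemize}
It contains no variables at all. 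So your count $|V(H)|\geq 2k-n+2$, and the unexplained ``extra deletions'' giving $3k-2n+3$, bound a module that does not occur. There is nothing to import verbatim from the symbolic case.

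\textbf{What is missing is a way to handle the extra quadratic generators.} The paper does this by backward induction on $|A|$, in three cases.
\begin{itemize}
\item If $A=(N_{G^c}(x_i)\cup N_{G^c}(x_j))\setminus\{x_i,x_j\}$, the colon is just $I(G^c-A)$, and Theorem \ref{main} applies.
\item If $L=\emptyset$, the colon is the edge ideal of a graph obtained from $G^c-A$ by adding edges, so it is not an induced-subgraph problem. Here one needs $I(G^c-A)^2:x_ix_j=I(G^c-A)^{(2)}:x_ix_j$ \cite[Lemma 3.2]{S3} and $\depth S/(J:u)\geq \depth S/J$ \cite[Corollary 1.3]{R}. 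Then Theorem \ref{main2} for $G-A$, together with $|A|\leq 2(n-k-2)$, gives exactly the bound $d-1$.
\item If $L\neq\emptyset$, one polarizes, adding $y_\ell$ for each $x_\ell\in L$, and splits on some $x_\ell\in L$ via \cite[Lemma 5.1]{dhs}. The branch $(I(H),x_\ell)$ is the same problem for $A\cup\{x_\ell\}$; this is the inductive step, with the now-free $y_\ell$ compensating the change in $|L|$. The branch $I(H):x_\ell$ is $I(G'^c)$ plus variables, where $G'=G-(N_{G^c}(x_i)\cup N_{G^c}(x_j)\cup N_{G^c}(x_\ell))$.
\end{itemize}
Only in this last branch is a third neighbourhood deleted. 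That is the actual source of $|V(G')|\geq 3k-2n+3$ and of the role of Lemma \ref{ineq}.
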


\begin{proof}
Parts of the proof are similar to that of \cite[Theorem 3.6]{S'}. We only sketch the proof with highlighting the difference.  We show the inequality
\begin{eqnarray}
    \label{bbbbb}
\depth{S/I(G^c)^2}\geq\frac{k}{2(n-k-1)}-1.
\end{eqnarray}
If $k\leq  (2n-2)/3$, then $\displaystyle\frac{k}{2(n-k-1)} \leq 1$ and (\ref{bbbbb}) is trivial. Let $k>(2n-2)/3$.

By virtue of \cite[Theorem 3.6]{S'}, we show that for each $\{x_i,x_j\}\in E(G^c)$ and for each $A\subseteq N_{G^c}(x_i)\cup N_{G^c}(x_j)$ with $x_i,x_j\notin A$, one has
\begin{eqnarray}
\label{ccccc}
    \depth{S_A/(I(G^c\setminus A)^2:x_ix_j)}\geq \frac{k}{2(n-k-1)}-1,
\end{eqnarray}
where $S_A:=K[x_k : x_k\notin A]$.

Set $r:=|N_{G^c}(x_i)\cup N_{G^c}(x_j)\setminus\{x_i,x_j\}|$. In particular $|A|\leq r$. We prove (\ref{ccccc}) by backward induction on $|A|$.  Since each vertex of $G$ is contained at least $k$ edges, each vertex of $G^c$ is contained
at most $n - k - 1$ edges.  Since $x_i,x_j\not\in A$, it follows that $|A|\leq 2(n-k-2)$. 

Suppose that $|A|=r$. Then $G^c- A$ is the disjoint union of $G^c- (N_{G^c}(x_i)\cup N_{G^c}(x_j))$ and the edge $\{x_i,x_j\}$. Hence, $$(I(G^c\setminus A)^2:x_ix_j)=I(G^c- A).$$
Consequently, Theorem \ref{main} implies that
\begin{eqnarray}
\label{ddddd}    
\depth{S_A/(I(G^c- A)^2:x_ix_j)}\geq \frac{\kappa(G\setminus A)}{2(|V(G\setminus A)-\kappa(G\setminus A)-1)}+1.
\end{eqnarray}
Since 
\[
k=\kappa(G)\leq \kappa(G\setminus A)+|A|\leq \kappa(G\setminus A)+2(n-k-2), 
\]
the above inequality (\ref{ddddd}) implies that$$\depth{S_A/(I(G^c\setminus A)^2:x_ix_j)}\geq \frac{k}{2(n-k-1)}.$$

Suppose that $|A|< r$. Set $$L:=N_{G^c\setminus A}(x_i) \cap N_{G^c\setminus A}(x_j).$$ It follows from \cite[Theorems 6.5 and 6.7]{b} that
\begin{align*}
(I(G^c- A)^2:x_ix_j)\, = & \, I(G^c- A)\\ +& \, \, (x_px_q : x_p\in N_{G^c- A}(x_i), x_q\in N_{G^c- A}(x_j),p\neq q)\\  +& \, \, (x_k^2 : x_k\in L).
\end{align*}
As a consequence, there is a graph $H$ for which $I(H)$ is the polarization (\cite[p.~19]{HHgtm260}) of $(I(G^c\setminus A)^2:x_ix_j)$. Let $$V(H)=\{x_t\mid x_t\in V(G\setminus A\}\cup\{y_k|x_k\in L\}.$$ Also, let $T$ be the polynomial ring over $K$ whose variables correspond to the vertices of $H$. 
We know from \cite[Corollary 1.6.3]{HHgtm260} that$$\depth{S_A/(I(G^c\setminus A)^2:x_ix_j)}=\depth{T/I(H)}-|L|.$$

First, suppose that $L=\emptyset$. It follows from \cite[Lemma 3.2]{S3} and \cite[Corollary 1.3]{R} that
\begin{align*}
& \, \depth{S_A/(I(G^c- A)^2:x_ix_j)}\\ = &\, \depth{S_A/(I(G^c- A)^{(2)}:x_ix_j)}\\  \geq & \, \depth{S_A/I(G^c- A)^{(2)}},
\end{align*}
Since $x_ix_j\in E(G^c- A)$, we conclude that $G- A$ is not a complete graph. Hence, it follows from Theorem \ref{main2} and the above inequality that
\begin{eqnarray}
\label{eeeee}   
\depth{S_A/(I(G^c- A)^2:x_ix_j)}\geq \frac{\kappa(G\setminus A)}{2(|V(G\setminus A)|-\kappa(G\setminus A)-1)}.
\end{eqnarray}
Since $$k=\kappa(G)\leq \kappa(G\setminus A)+|A|\leq \kappa(G\setminus A)+2(n-k-2),$$ it follows from (\ref{eeeee}) that$$\depth{S_A/(I(G^c- A)^2:x_ix_j)}\geq \frac{k}{2(n-k-1)}-1.$$

Second, suppose that $L\neq \emptyset$ and choose a vertex $x_{\ell}\in L$. It follows from \cite[Lemma 5.1]{dhs} that$$\depth{T/I(H)}\in\{\depth{T/(I(H),x_{\ell})}, \depth{T/(I(H):x_{\ell})}\}.$$So, the following two cases arise.

\medskip

\noindent
{\bf (Case 1.)} Suppose that $\depth{T/I(H)}=\depth{T/(I(H),x_{\ell})}$. Since $$(I(H),x_{\ell})=(I(H- x_{\ell}),x_{\ell}),$$ it follows that $y_{\ell}$ is a regular element on $T/(I(H),x_{\ell})$. Furthermore, $I(H\setminus x_{\ell})$ is the polarization of $(I(G\setminus (A\cup \{x_{\ell}\}))^2:x_ix_j)$. It then follows  from \cite[Corollary 1.6.3]{HHgtm260} and our backward induction on $|A|$ that
\begin{align*}
& \depth{S_A/(I(G^c\setminus A)^2:x_ix_j)}\\ = & \, \depth{T/I(H)}-|L|\\ =& \, \depth{T/(I(H\setminus x_{\ell}), x_{\ell})}-|L|\\  \geq & \, \frac{k}{2(n-k-1)}-1+|L|-1+1-|L|\\  = & \, \frac{k}{2(n-k-1)}-1,  
\end{align*}
as desired.

\medskip

\noindent
{\bf (Case 2.)} Suppose that $\depth{T/I(H)}=\depth{T/(I(H): x_{\ell})}$. Set$$G':=G- (N_{G^c}(x_i)\cup N_{G^c}(x_j)\cup N_{G^c}(x_{\ell})).$$It follows from the structure of $H$ that$$(I(H):x_{\ell})=I(G'^c)+(x_k : x_k\in N_H(x_{\ell})).$$ In particular, the variables in the set $\{y_{\ell'} : x_{\ell'} \in L, \ell\neq \ell'\}\cup\{x_{\ell}\}$ form a regular sequence on $T/I(H)$. Set $S':=K[x_k : x_k\in V(G')]$. One has$$\depth{T/I(H)}=\depth{S'/I(G'^c)}+|L|.$$Recall that each vertex of $G^c$ belongs to at most $n-k-1$ edges. Therefore, $$|(N_{G^c}(x_i)\cup N_{G^c}(x_j)\cup N_{G^c}(x_{\ell})|\leq 3(n-k-1).$$As a consequence, $|V(G')|\geq 3k-2n+3$. If $G'$ is a complete graph, then 
\begin{align*}
& \depth{S_A/(I(G^c\setminus A)^2:x_ix_j)} \\ =& \,\depth{T/I(H)}-|L|\\  = & \, |V(G')|+|L|-|L|=|V(G')|\geq 3k-2n+3\\ \geq & \,\frac{k}{2(n-k-1)}-1,
\end{align*}
where the last inequality follows from Lemma \ref{ineq}.
If $G'$ is not a complete graph, then we deduce from Theorem \ref{main} that
\begin{align*}
& \depth{S_A/(I(G^c\setminus A)^2:x_ix_j)}\\= & \, \depth{T/I(H)}-|L|\\  \geq & \, \frac{\kappa(G')}{2(|V(G')-\kappa(G')-1)}+1+|L|-|L|\\  = & \, \frac{\kappa(G')}{2(|V(G')-\kappa(G')-1)}+1.  
\end{align*}
Furthermore, since $$k=\kappa(G)\leq \kappa(G')+|V(G)|-|V(G')|\leq \kappa(G')+3(n-k-1),$$one has$$ \depth{S_A/(I(G^c\setminus  A)^2:x_ix_j)}\geq\frac{k}{2(n-k-1)}-1,$$ 
as desired.
\hspace{11.7cm}
\end{proof}

\begin{Example}
{\em 
Let $G=C_6$.  One has $\depth{S/I(G^c)}=2, \depth{S/I(G^c)^{(2)}}=1$, and $\depth{S/I(G^c)^{2}}=0$.  Thus, the equality holds in each of the inequalities of Theorems \ref{main}, \ref{main2} and \ref{main3}.
}
\end{Example}

\section*{Acknowledgments}
The second author is supported by a FAPA grant from Universidad de los Andes.

\section*{Statements and Declarations}
The authors have no Conflict of interest to declare that are relevant to the content of this article.

\section*{Data availability}
Data sharing does not apply to this article as no new data were created or analyzed in this study.


\begin{thebibliography}{99}
\bibitem{b}
A.~Banerjee, The regularity of powers of edge ideals, {\em J. Algebraic Combin.} {\bf 41} (2015), 303–321.
\bibitem{dhs}
H.~Dao, C.~Huneke, J.!Schweig, Bounds on the regularity and projective dimension of ideals associated to graphs, {\em J. Algebraic Combin.} {\bf 38} (2013), 37--55.
\bibitem{HHgtm260} 
J.~Herzog and T.~Hibi, ``Monomial Ideals,'' GTM 260, Springer, 2011.
\bibitem{HaHibi}
T.~H.~H\`a and T.~Hibi, Vertex connectivity of chordal graphs, {\em Discrete Math.} {\bf 349} (2026), \#114777.
\bibitem{H} 
T.~Hibi, The comparability graph of a modular lattice, {\em Combinatorica} {\bf 18} (1998), 541--548.
\bibitem{OH}
H.~Ohsugi and T.~Hibi, Compressed polytopes, initial ideals and complete multipartite graphs, {\em Illinois J. Math.} {\bf 44} (2000), 391--406.
\bibitem{R}
A.~Rauf, Depth and Stanley depth of multigraded modules, {\em Comm. Algebra}  {\bf 38} (2010), 773--784.
\bibitem{S}
S.~A.~Seyed Fakhari, On the depth of symbolic powers of edge ideals of graphs, {\em Nagoya Math. J.} {\bf 245} (2022), 28--40.
\bibitem{S3}
S.~A.~Seyed Fakhari, On the regularity of small symbolic powers of edge ideals of graphs, {\em Math. Scand.}, {\bf 129} (2023), 39--59.
\bibitem{S'}
S.~A.~Seyed Fakhari, Lower bounds for the depth of the second power of edge ideals, {\em Collect. Math.} {\bf 75} (2024), 535--544.
\bibitem{TH}
N.~Terai and T.~Hibi, Finite free resolutions and $1$-skeletons of simplicial complexes, {\em J. Alg. Combin.} {\bf 6} (1997), 89--93.
\end{thebibliography}
\end{document}